\font\smallit=cmti10
\font\smalltt=cmtt10
\renewcommand\section{\@startsection {section}{1}{\z@}
{-30pt \@plus -1ex \@minus -.2ex}
{2.3ex \@plus.2ex}
{\normalfont\normalsize\bfseries}}
\renewcommand\subsection{\@startsection{subsection}{2}{\z@}
{-3.25ex\@plus -1ex \@minus -.2ex}
{1.5ex \@plus .2ex}
{\normalfont\normalsize\bfseries}}
\renewcommand{\@seccntformat}[1]{\csname the#1\endcsname. }
\newtheorem{theorem}{Theorem}
\newtheorem{lemma}{Lemma}
\newtheorem{corollary}{Corollary}
\newcommand{\beq}{\begin{equation}}
\newcommand{\eeq}{\end{equation}}
\def\({\left(}
\def\){\right)}
\begin{document}

\begin{center}
\uppercase{\bf On the relationship between the number of solutions of congruence systems and the resultant of
two polynomials}
\vskip 20pt
{\bf Dmitry I. Khomovsky}\\
{\smallit Lomonosov Moscow State University, Moscow, RF}\\
{\tt khomovskij@physics.msu.ru}
\end{center}
\vskip 30pt

\centerline{\smallit Received: , Revised: , Accepted: , Published: } 
\vskip 30pt

\centerline{\bf Abstract}

\noindent
Let $q$ be an odd prime and $f(x)$, $g(x)$ be polynomials with integer coefficients. If the system of congruences $f(x) \equiv g(x) \equiv 0 \pmod{q}$ has $\ell$ solutions, then $R\(f(x),g(x)\)\equiv 0 \pmod{q^\ell}$, where $R\(f(x),g(x)\)$ is the resultant of the polynomials. Using this result we give new proofs of some known congruences involving the Lucas sequences.

\pagestyle{myheadings}
\markright{\smalltt INTEGERS: 15 (2015)\hfill}
\thispagestyle{empty}
\baselineskip=12.875pt
\vskip 30pt

\section{Introduction} The resultant $R\(f,g\)$  of two polynomials $f(x)=a_n x^n+ \cdots +a_0$ and $g(x)=b_m x^m+ \cdots
+b_0$ of degrees $n$ and $m$, respectively, with coefficients in a field $F$ is defined by the determinant of  the
$(m+n)\times (m+n)$ Sylvester matrix \cite{4}  \beq \label{1}R\(f,g\)=\begin{vmatrix}
 a_{n} & a_{n-1} & \cdots & \cdots & \cdots& a_0 &  &   &   &   \\
   & a_{n} & a_{n-1}& \cdots & \cdots & \cdots & a_0 &  &   &   \\
  &  & \cdots &  &  &    &    &   &  & \\
  &  &   & a_{n}&a_{n-1} & \cdots  & \cdots  & \cdots &a_0\\
b_{m} & b_{m-1}& \cdots & \cdots & b_0    &  &   &  &  \\
 & b_{m} & b_{m-1}& \cdots & \cdots  & b_0 &  &  &  \\
  &  & \cdots &  &  &    &    &   &  & \\
  &  &   & \cdots&  &    &    &   &  &\\
 & &   & &b_{m}&b_{m-1} & \cdots  & \cdots  &b_0
\end{vmatrix}. \eeq Let $f, g, h$ and $v$ be the polynomials below. Some important properties of the resultant are:

$(i)$ If $f(x)=a_n \prod_{i=1}^{n}(x-\alpha_{i})$ and $g(x)=b_m \prod_{j=1}^{m}(x-\beta_{j})$, then \beq \label{2}
R\(f,g\)=a_{n}^m \prod_{i=1}^{n}g(\alpha_i)=(-1)^{mn}b_{m}^n \prod_{i=1}^{m}f(\beta_i)=a_{n}^m b_{m}^n
\prod_{i=1}^{n}\prod_{j=1}^{m}(\alpha_{i}-\beta_{j}),\nonumber\eeq where $\alpha_{i}$ and $\beta_{j}$ are the roots of $f(x)$ and $g(x)$, respectively, in some extension of $F$, each repeated according to its multiplicity. This property is often taken as the definition of the resultant.\\

$(ii)$ $f$ and $g$ have a common root in some extension of $F$ if and only if $R\(f,g\)=0$.\\

$(iii)$ $R\(f ,g\)=(-1)^{n m}R\(g,f\)$.\\

$(iv)$ $R\(f h,g\)=R\(f,g\)R\(h,g\)$ and $R\(f,g h\)=R\(f,g\)R\(f,h\)$.\\

$(v)$ If $g=v f+h$ and $\deg(h)=d$, then $R\(f,g\)=a_{n}^{m-d}R\(f,h\)$.\\

$(vi)$ If $p$ is a positive integer, then $R\(f(x^p),g(x^p)\)=R\(f(x),g(x)\)^p$.\\

\noindent All these properties are well-known \cite{5,2}. More details concerning the resultant can be found in \cite{3,1}. Another important classical result is (see \cite{1}):

\begin{lemma} Let $f=\sum_{i=0}^{n} a_i x^i$ and $g=\sum_{j=0}^{m} b_j x^j$ be two polynomials of degrees $n$ and $m$,
respectively. Let, for $k\geq0$, $r_{k}(x)=r_{k,n-1}x^{n-1}+\cdots+r_{k,0}$ be the remainder of $x^k g(x)$ modulo $f(x)$,
i.e., $x^k g(x)=v_{k}(x)f(x)+r_{k}(x)$, where $v_{k}$ is some polynomial and $\deg(r_k)\leq n-1$. Then \beq \label{3}
R\(f,g\)=a_{n}^m \begin{vmatrix}
 r_{n-1,n-1} & r_{n-1,n-2} & \cdots & r_{n-1,0}   \\
 r_{n-2,n-1} & r_{n-2,n-2} & \cdots & r_{n-2,0}   \\
      \vdots &             &        & \vdots   \\
 r_{0,n-1}   & r_{0,n-2}   & \cdots & r_{0,0}
\end{vmatrix}. \eeq
\end{lemma}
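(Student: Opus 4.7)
The plan is to reduce the Sylvester determinant by elementary row operations that replace each $x^k g(x)$ row by the corresponding remainder $r_k(x)$ row. Label the rows of the $(m+n)\times(m+n)$ Sylvester matrix so that the first $m$ rows are the coefficient vectors of $x^{m-1}f(x),x^{m-2}f(x),\ldots,f(x)$, and the last $n$ rows are the coefficient vectors of $x^{n-1}g(x),x^{n-2}g(x),\ldots,g(x)$, all expressed in the basis $x^{m+n-1},\ldots,x,1$.

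For each $k$ with $0\le k\le n-1$, the division identity $x^k g(x)=v_k(x)f(x)+r_k(x)$ forces $\deg v_k\le k+m-n\le m-1$. Writing $v_k(x)=\sum_{j=0}^{m-1}c_{k,j}\,x^j$, the product $v_k(x)f(x)$ is the linear combination $\sum_{j=0}^{m-1}c_{k,j}\cdot x^j f(x)$ of the first $m$ rows of the Sylvester matrix. Subtracting this combination from the row corresponding to $x^k g(x)$ replaces that row with the coefficient vector of $r_k(x)$, and since $\deg r_k\le n-1$, the new row vanishes in the first $m$ columns.

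Carrying out these $n$ row operations preserves the determinant and produces a block lower triangular matrix with an $m\times m$ top-left block $A$ (the unaltered top-left corner of the $f$-rows, which is upper triangular with $a_n$ on the diagonal, so $\det A=a_n^m$) and an $n\times n$ bottom-right block $C$ consisting of the last $n$ entries of the modified $g$-rows. By construction $C$ is precisely the array $(r_{k,j})$ that appears in~(\ref{3}), with rows and columns in the stated order. Hence $R(f,g)=\det A\cdot\det C=a_n^m\det(r_{k,j})$, proving the lemma.

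The main obstacle is purely organizational: one must check that $\deg v_k\le m-1$ for every $k\le n-1$, which guarantees that all $f$-rows needed for the reduction are already present in the Sylvester matrix, and then verify that the row/column indexing of the resulting $n\times n$ block agrees with the ordering displayed in the lemma. Both verifications are routine once the row-labeling convention is fixed.
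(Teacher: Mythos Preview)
Your argument is correct and is essentially the standard proof of this classical identity. The paper does not actually supply a proof of this lemma: it is stated with a reference to \cite{1} and then used as a black box in the proof of Theorem~1. So there is no ``paper's own proof'' to compare against; your row-reduction of the Sylvester matrix is precisely the argument one finds in the cited source.

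One cosmetic slip: after the subtractions the matrix has the shape
\[
\begin{pmatrix} A & * \\ 0 & C \end{pmatrix},
\]
which is block \emph{upper} triangular, not block lower triangular as you wrote. This does not affect the conclusion, since $\det\begin{psmallmatrix}A&*\\0&C\end{psmallmatrix}=\det A\cdot\det C$ in either case. Your verification that $\deg v_k\le m-1$ (so that only the available $f$-rows are needed) and that the row/column ordering of the $n\times n$ block matches the display in the lemma is exactly the bookkeeping that makes the argument go through.
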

In the next section we prove a theorem on the relationship
between the number of solutions of the congruence system $f(x) \equiv g(x) \equiv 0 \pmod{q}$  and  the resultant of two
polynomials $R\(f(x),g(x)\)$. Then using this result we give new proofs of some congruences involving the Lucas sequences.

\section{Properties of the resultant}

A polynomial $f(x)$ with integer coefficients is called {\it not identically zero in $\mathbb{Z}_q$}  if at least one of its coefficients is not divisible by $q$. Let $A=\(a_{i,j}\)$ be an arbitrary matrix. Then by $A^{<q>}$ we will denote the matrix $\(a_{i,j}'\)$ over $\mathbb{Z}_q$ of the same type such that $a_{i,j}'$ is the residue of $a_{i,j}$ modulo $q$.

\label{a}\begin{theorem} Let $q$ be a prime and $f(x)$, $g(x)$ be polynomials with integer coefficients that are not identically zero in $\mathbb{Z}_q$. If the system of congruences $f(x) \equiv 0 \pmod{q}$ and $g(x) \equiv 0 \pmod{q}$ has $\ell$ solutions, then $R\(f(x),g(x)\)\equiv 0 \pmod{q^\ell}$.\end{theorem}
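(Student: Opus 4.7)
The plan is to work with the Sylvester matrix $S$ of $f$ and $g$ directly and show two things: (a) the reduction $\bar S \in \mathbb{Z}_q^{(m+n)\times(m+n)}$ has rank at most $m+n-\ell$ over $\mathbb{F}_q=\mathbb{Z}_q$, and (b) any integer matrix whose reduction mod $q$ has nullity at least $\ell$ must have determinant divisible by $q^\ell$. Combining these gives $q^\ell\mid \det S = R(f,g)$.

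For (a), I will use the standard interpretation of the Sylvester matrix: the map
$(u,v)\mapsto uf+vg$ with $\deg u<m$ and $\deg v<n$ has matrix $S^{\mathrm T}$ in the monomial bases, so the kernel of $\bar S^{\mathrm T}$ over $\mathbb{F}_q$ consists of pairs $(\bar u,\bar v)$ satisfying $\bar u\bar f+\bar v\bar g=0$ in $\mathbb{F}_q[x]$ with the same degree bounds. To produce $\ell$ linearly independent such pairs, let $c_1,\dots,c_\ell\in\mathbb{F}_q$ be the common roots, set $h(x)=\prod_{i=1}^\ell(x-c_i)$, and factor $\bar f=h\bar f_1$, $\bar g=h\bar g_1$ in $\mathbb{F}_q[x]$. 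Then for $k=0,1,\dots,\ell-1$ the pairs $(x^k\bar g_1,-x^k\bar f_1)$ clearly satisfy the relation, their degrees are within the required bounds (using only $\deg\bar f\le n$ and $\deg\bar g\le m$, which are automatic), and they are linearly independent because $\mathbb{F}_q[x]$ is a domain and $\bar f_1,\bar g_1$ are nonzero (this is exactly where the ``not identically zero'' hypothesis enters).

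For (b), I will invoke the Smith normal form over $\mathbb{Z}$: $S$ is equivalent to a diagonal matrix $\mathrm{diag}(d_1,\dots,d_{m+n})$ with $d_i\mid d_{i+1}$, and the rank of $\bar S$ over $\mathbb{F}_q$ equals the number of $d_i$ coprime to $q$. A rank drop of at least $\ell$ therefore forces at least $\ell$ of the $d_i$ to be divisible by $q$, and hence $q^\ell\mid \pm\prod d_i=\det S$.

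I expect the main subtlety to be the handling of possible degree drop, i.e.\ the case $q\mid a_n$ or $q\mid b_m$, where $\bar S$ is no longer exactly the Sylvester matrix of $\bar f,\bar g$ in their reduced degrees. The attractive feature of the chosen approach is that the kernel construction only uses $\deg\bar f\le n$ and $\deg\bar g\le m$, both of which are automatic, so degree drop creates no obstacle here (in fact it only enlarges the kernel). The remaining routine issue is to double-check that my identification of $\ker\bar S^{\mathrm T}$ with polynomial relations $\bar u\bar f+\bar v\bar g=0$ matches the row/column conventions used in (\ref{1}); this is purely bookkeeping.
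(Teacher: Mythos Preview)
Your argument is correct and is genuinely different from the paper's. The paper does not work with the Sylvester matrix; it invokes Lemma~1 to replace $R(f,g)$ by $a_n^m\det A$ for the $n\times n$ matrix $A$ of remainders $r_k$, interprets the rows of $A$ modulo $q$ as polynomials of degree $<n$ that vanish at all $\ell$ common roots, and then performs integer row operations bringing $A^{<q>}$ to upper triangular form. The last $\ell$ rows then correspond to polynomials of degree $<\ell$ with $\ell$ distinct roots in $\mathbb{F}_q$, hence are zero mod $q$, giving $q^\ell\mid\det A$. Your route instead stays with the full $(m+n)\times(m+n)$ Sylvester matrix, \emph{constructs} $\ell$ independent elements of $\ker\bar S^{\mathrm T}$ explicitly by factoring out $h=\prod(x-c_i)$ from $\bar f,\bar g$, and then converts ``nullity $\ge\ell$'' into ``$q^\ell\mid\det S$'' via the Smith normal form. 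What this buys you: the Sylvester matrix has integer entries unconditionally, so the degree-drop case $q\mid a_n$ or $q\mid b_m$ needs no special attention (as you observe, it only helps), whereas in the paper's setup one has to be a bit careful that the remainder matrix $A$ and the factor $a_n^m$ interact correctly with integrality. What the paper's approach buys: a smaller matrix, and only elementary row operations rather than Smith normal form, which keeps the example computation in Section~2 self-contained. Both arguments ultimately hinge on the same principle---a rank drop of $\ell$ over $\mathbb{F}_q$ forces $q^\ell\mid\det$---but reach it by different bookkeeping.
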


\begin{proof} Let $\deg{f}=n$ and $\deg{g}=m$. Then we have that the system $f(x) \equiv g(x) \equiv 0 \pmod{q}$ has
$\ell$ solutions by the theorem conditions and $\ell\leq\min[n,m]$
as the polynomials are not identically zero in $\mathbb{Z}_q$.
Let $r_{k}(x)=r_{k,n-1}x^{n-1}+\cdots+r_{k,0}$ be the remainder of $x^k g(x)$ modulo $f(x)$, i.e., $x^k
g(x)=v_{k}(x)f(x)+r_{k}(x)$, where $v_{k}(x)$ is some polynomial and $\deg(r_k)\leq n-1$. Then we get the system of
congruences \beq \label{4} \begin{pmatrix}
 r_{n-1,n-1} & r_{n-1,n-2} & \cdots & r_{n-1,0}   \\
 r_{n-2,n-1} & r_{n-2,n-2} & \cdots & r_{n-2,0}   \\
      \vdots &             &        & \vdots   \\
 r_{0,n-1}   & r_{0,n-2}   & \cdots & r_{0,0}
\end{pmatrix} \begin{pmatrix} x^{n-1}\\ x^{n-2} \\
 \vdots \\
1 \end{pmatrix} \equiv \begin{pmatrix} 0 \\ 0 \\
 \vdots \\
0 \end{pmatrix}\pmod{q}. \eeq This system has at least $\ell$ solutions, since each congruence of
$(\ref{4})$ is derived from $f(x) \equiv 0 \pmod{q}$ and $g(x) \equiv 0 \pmod{q}$. Let $A=\(a_{i,j}\)$ be a matrix of the
system $(\ref{4})$.  With the help of the procedure analogous  to row reduction using operations of swapping the rows and adding a multiple of one row to another row, we can reduce $A$ to a matrix $A_1$ with integer coefficients such that
$\det{\(A\)}=\pm\det{\(A_1\)}$ and $A_1^{<q>}$ is an upper triangular matrix. We can see that each solution of the system
$(\ref{4})$ is also a solution of the following system over $\mathbb{Z}_q$: \beq \label{5} \(A_1^{<q>}\) \begin{pmatrix}
x^{n-1}\\ x^{n-2} \\
 \vdots \\
1 \end{pmatrix} \equiv \begin{pmatrix}
 \vdots \\
0 \end{pmatrix}\pmod{q}, \eeq so $(\ref{5})$ has at least $\ell$ solutions. Note that the last $\ell$ congruences of $(\ref{5})$ have degrees less than $\ell$. On the other hand, these congruences have at least $\ell$ solutions. Hence all these congruences must be congruences with zero coefficients, i.e., the last $\ell$ rows of $A_1^{<q>}$ are zero rows. Therefore, all elements of the last $\ell$ rows of $A_1$ are divisible by $q$, so $\det{\(A\)}=\pm\det{\(A_1\)}$ is divisible by $q^{\ell}$. Thus, by Lemma $1$ we have $R\(f,g\)\equiv 0 \pmod{q^\ell}$.
\end{proof}

\noindent {\bf Remark.} If one or both polynomials equal zero in $\mathbb{Z}_q$, then by property $(i)$ we obtain that either $R(f,g)\equiv 0\pmod{q^n}$ or $R(f,g)\equiv 0\pmod{q^m}$.  We do not consider this trivial case in Theorem 1.

\noindent{\bf Example.} Let $f(x)=x^{6}+1,\, g(x)=(x+1)^{6}+1$. The system of congruences $x^{6}+1\equiv 0 \pmod{13}$ and $(x+1)^{6}+1\equiv
0 \pmod{13}$ has three solution in $\mathbb{Z}_{13}$: $x=5,6,7$. The matrix of the system $(\ref{4})$ for these polynomials is:
\beq \label{6} A=\left( \begin{array}{cccccc}
 1 & -6 & -15 & -20 & -15 & -6 \\
 6 & 1 & -6 & -15 & -20 & -15 \\
 15 & 6 & 1 & -6 & -15 & -20 \\
 20 & 15 & 6 & 1 & -6 & -15 \\
 15 & 20 & 15 & 6 & 1 & -6 \\
 6 & 15 & 20 & 15 & 6 & 1
\end{array} \right). \eeq Since the resulting echelon form of matrices after row reduction  is not unique, we obtain the reduced row echelon form of the matrix $A$, which is unique: \beq \label{6.1} A_1^{<13>}=\left( \begin{array}{cccccc}
 1 & 7 & 11 & 6 & 11 & 7 \\
 0 & 1 & 9 & 11 & 1 & 11 \\
 0 & 0 & 1 & 8 & 3 & 11 \\
 0 & 0 & 0 & 0 & 0 & 0 \\
 0 & 0 & 0 & 0 & 0 & 0 \\
 0 & 0 & 0 & 0 & 0 & 0
\end{array} \right) \longrightarrow
\left( \begin{array}{cccccc}
 1 & 0 & 0 & 7 & 4 & 8 \\
 0 & 1 & 0 & 4 & 0 & 3 \\
 0 & 0 & 1 & 8 & 3 & 11 \\
 0 & 0 & 0 & 0 & 0 & 0 \\
 0 & 0 & 0 & 0 & 0 & 0 \\
 0 & 0 & 0 & 0 & 0 & 0
\end{array} \right).\eeq So we get $\det{A}\equiv 0 \pmod{13^3}$ and $R\(x^{6}+1,(x+1)^{6}+1\)\equiv 0 \pmod{13^3}$. This
resultant is actually equal to $2^4\times5\times13^3$.

\begin{corollary} Let $q$ be a prime and $f(x)$, $g(x)$ be polynomials of degrees $n$ and $m$, respectively, with integer coefficients that are not identically zero in $\mathbb{Z}_q$. Let $A$ be a matrix  of the system $(\ref{4})$ for $f(x), g(x)$. If $\operatorname{Rank} {A}=p$ in $\mathbb{Z}_q$, then $R\(f,g\)\equiv 0\pmod{ q^{n-p}}$. If the system $f(x) \equiv g(x) \equiv 0 \pmod{q}$ has $\ell$ solutions, then $n-p\geq \ell$. Moreover, if  $M$ is any $k\times k$ minor of the matrix $A$ and $k> p$, then $M\equiv 0\pmod{ q^{k-p}}$.
\end{corollary}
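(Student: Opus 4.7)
My plan is to generalize the proof strategy of Theorem 1 and use Lemma 1 in the form $R(f,g)=a_{n}^{m}\det A$, so that $q$-adic bounds on $\det A$ translate directly into $q$-adic bounds on $R(f,g)$. The key tool is that over the field $\mathbb{Z}_{q}$ I may perform Gaussian elimination via integer row operations: swaps and additions of integer multiples of one row to another. These preserve $\det A$ up to sign and keep the matrix in $\mathbb{Z}$, while still producing row echelon form of $A^{<q>}$ over $\mathbb{Z}_{q}$.

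First I would handle the bound $R(f,g)\equiv 0 \pmod{q^{n-p}}$. Applying the integer-lifted Gaussian elimination to $A$ yields a matrix $A_{1}$ with $\det A=\pm\det A_{1}$ and $A_{1}^{<q>}$ in row echelon form. Since rank is preserved by elementary operations, $A_{1}^{<q>}$ has exactly $p$ nonzero rows and $n-p$ zero rows; the corresponding $n-p$ rows of $A_{1}$ then have every entry divisible by $q$. Factoring one power of $q$ out of each such row shows $q^{n-p}\mid\det A_{1}$, and Lemma 1 finishes the claim. The inequality $n-p\geq\ell$ follows from a Vandermonde argument: every common root $x_{i}\in\mathbb{Z}_{q}$ of $f$ and $g$ gives a vector $v_{i}=(x_{i}^{n-1},\ldots,x_{i},1)^{T}$ in the kernel of $A$ modulo $q$, and the last $\ell$ coordinates of $v_{1},\ldots,v_{\ell}$ form a Vandermonde matrix with nonzero determinant $\prod_{i<j}(x_{j}-x_{i})$ in $\mathbb{Z}_{q}$. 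Hence the $v_{i}$ are linearly independent and $n-p=\dim\ker(A\bmod q)\geq\ell$, so combined with the first part this also recovers Theorem 1.

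For the minor statement I would apply the same integer-lifted elimination to an arbitrary $k\times k$ submatrix $B$ of $A$. Since passing to a submatrix cannot increase rank, $\operatorname{Rank}(B)\leq p<k$ over $\mathbb{Z}_{q}$, so the reduced form $B_{1}$ has at least $k-p$ zero rows modulo $q$; reading off, $q^{k-p}\mid\det B_{1}=\pm M$. The step that I expect to require the most care is justifying that Gaussian elimination over $\mathbb{Z}_{q}$ can always be realised without denominators — each pivot-clearing step needs a multiplier that in the field is the negative quotient of two entries, which must be replaced by an integer representative and, when the chosen pivot is not a unit in $\mathbb{Z}$, preceded by a suitable row swap. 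This is routine but is the only place where the argument uses the primality of $q$.
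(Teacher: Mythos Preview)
Your proof is correct and follows exactly the route the paper intends: its own proof is the single line ``This follows from Theorem 1,'' and you have simply unpacked the row-reduction argument of Theorem 1 to yield all three assertions, with your Vandermonde independence step being a clean linear-algebraic rephrasing of the paper's ``a congruence of degree $<\ell$ with $\ell$ solutions must have all zero coefficients.'' One small inaccuracy in your closing remark: the primality of $q$ is not used \emph{only} in lifting Gaussian elimination to $\mathbb{Z}$ --- your Vandermonde step also needs $\mathbb{Z}_q$ to be an integral domain so that $\prod_{i<j}(x_j-x_i)\not\equiv 0\pmod q$.
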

\begin{proof} This follows from Theorem 1.\end{proof}

The question about the relation of the multiplicity of $q$ as a factor of $R(f,g)$ and the degree of common
factor of the polynomials $f$ and $g$ modulo $q$  was studied in \cite{10}. This question is closely related to Theorem 1 and first appeared in \cite{11}.
\section{The congruences involving the terms of the Lucas sequences}

\begin{theorem} Let $f(x)=a_n x^n+ \cdots +a_0$ be a polynomial of degree $n$ with  integer coefficients and $q$ be an odd prime. Let $a_0\not\equiv 0\pmod{q}$ and let the congruence $f(x)\equiv 0 \pmod{q}$ have $\ell$ solutions. Then \beq \label{7} R(f(x),x^{q-1}-1)\equiv a_n^{q-1}\prod_{i=1}^{n}(\alpha_i^{q-1}-1)\equiv 0 \pmod{q^\ell}, \eeq where $\alpha_{i}$ are the roots of $f(x)$, each repeated according to its multiplicity. \end{theorem}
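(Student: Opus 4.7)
The statement has two parts: an equality (the explicit product form of the resultant) and a divisibility. The first equality is immediate from property $(i)$ applied to $f(x)=a_n\prod_{i=1}^n(x-\alpha_i)$ and $g(x)=x^{q-1}-1$: reading off $R(f,g)=a_n^{q-1}\prod_{i=1}^n g(\alpha_i)=a_n^{q-1}\prod_{i=1}^n(\alpha_i^{q-1}-1)$. So the real content is the divisibility by $q^\ell$, and my plan is to deduce it directly from Theorem 1 applied to the pair $f(x),\,g(x)=x^{q-1}-1$.

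To invoke Theorem 1 I first check its hypotheses. Both polynomials are not identically zero in $\mathbb{Z}_q$: the polynomial $x^{q-1}-1$ is monic, and $f$ has a nonzero constant term $a_0$ modulo $q$ by assumption. So the only substantive step is to show that the system $f(x)\equiv 0\pmod q$, $x^{q-1}-1\equiv 0\pmod q$ has at least $\ell$ solutions.

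This is where Fermat's little theorem enters, and it is the crux of the argument (though not really an obstacle). Since $a_0\not\equiv 0\pmod q$, evaluating $f(0)\equiv a_0\not\equiv 0\pmod q$ shows that no root of $f$ modulo $q$ is zero. On the other hand, by Fermat's little theorem every nonzero residue $x\in\mathbb{Z}_q^{\times}$ satisfies $x^{q-1}\equiv 1\pmod q$. Consequently, each of the $\ell$ solutions of $f(x)\equiv 0\pmod q$ is automatically a solution of $x^{q-1}-1\equiv 0\pmod q$, so the joint system has (at least) $\ell$ solutions.

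Now Theorem 1 applies and yields $R(f,x^{q-1}-1)\equiv 0\pmod{q^\ell}$, which combined with the first equality finishes the proof. The step I expect to need the most care in writing is the verification that no root of $f$ is $0$ modulo $q$, since the hypothesis $a_0\not\equiv 0\pmod q$ is precisely what lets us convert the $\ell$ solutions of $f\equiv 0$ into $\ell$ joint solutions; without it one could lose one solution and only conclude divisibility by $q^{\ell-1}$.
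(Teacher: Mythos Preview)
Your proof is correct and follows essentially the same approach as the paper: compute the resultant via property $(i)$, use $a_0\not\equiv 0\pmod q$ to ensure all $\ell$ roots of $f$ modulo $q$ are nonzero, invoke Fermat's little theorem so that these $\ell$ residues also solve $x^{q-1}-1\equiv 0\pmod q$, and then apply Theorem~1. Your write-up is slightly more explicit in verifying the ``not identically zero'' hypothesis of Theorem~1, but the argument is otherwise identical.
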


\begin{proof} Consider $R(f(x),x^{q-1}-1)$. Since $f(x)=a_n \prod_{i=1}^{n} (x-\alpha_{i})$, then \beq \label{8}
R\(f(x),x^{q-1}-1\)=a_n^{q-1} \prod_{i=1}^{n}(\alpha_i^{q-1}-1).\eeq We know that $q$ is an odd prime, so the congruence
$x^{q-1}-1\equiv 0 \pmod{q}$ has $q-1$  solutions (zero is not one of them). On the other hand, the congruence $f(x)\equiv 0 \pmod{q}$ has
$\ell$ nonzero solutions, as $a_0\not\equiv 0 \pmod{q}$. Hence the system of congruences $f(x)\equiv x^{q-1}-1\equiv
0 \pmod{q}$ also has  $\ell$ solutions. Then by Theorem 1  we have $R(f(x),x^{q-1}-1)\equiv 0 \pmod{q^\ell}$.\end{proof}

\begin{theorem} Let $f(x)=a_n x^n+ \cdots +a_0$ be a polynomial of degree $n$ with  integer coefficients and $q$ be an odd prime. Let $a_0\not\equiv 0\pmod{q}$ and let the congruence $f(x)\equiv 0 \pmod{q}$ have $\ell$ solutions. If $b$ solutions of them are quadratic residues modulo $q$, then
\beq \label{9} R(f(x),x^{\frac{q-1}{2}}-1)\equiv a_n^{\frac{q-1}{2}}\prod_{i=1}^{n}(\alpha_i^{\frac{q-1}{2}}-1)\equiv 0\pmod{q^b} \eeq and \beq \label{10} R(f(x),x^{\frac{q-1}{2}}+1)\equiv
a_n^{\frac{q-1}{2}}\prod_{i=1}^{n}(\alpha_i^{\frac{q-1}{2}}+1)\equiv 0 \pmod{q^{\ell-b}}, \eeq where $\alpha_{i}$ are the roots of $f(x)$, each repeated according to its multiplicity.\end{theorem}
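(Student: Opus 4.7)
The plan is to imitate the proof of Theorem 3, using Euler's criterion to identify which roots of $f$ modulo $q$ also satisfy $x^{(q-1)/2}\equiv 1$ versus $x^{(q-1)/2}\equiv -1$, and then invoking Theorem 1.

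First I would dispatch the two equalities in \eqref{9} and \eqref{10}: since $f(x)=a_n\prod_{i=1}^n(x-\alpha_i)$ and the polynomials $x^{(q-1)/2}\mp 1$ are monic of degree $(q-1)/2$, property $(i)$ from the introduction gives directly
\[
R\bigl(f(x),x^{(q-1)/2}\mp 1\bigr)=a_n^{(q-1)/2}\prod_{i=1}^n\bigl(\alpha_i^{(q-1)/2}\mp 1\bigr).
\]
So only the divisibility claims require work.

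Next I would count common solutions modulo $q$. Because $q$ is an odd prime, Euler's criterion says that for a nonzero $x\in\mathbb{Z}_q$ one has $x^{(q-1)/2}\equiv 1\pmod q$ iff $x$ is a quadratic residue, and $x^{(q-1)/2}\equiv -1\pmod q$ iff $x$ is a quadratic non-residue; thus each of the congruences $x^{(q-1)/2}-1\equiv 0$ and $x^{(q-1)/2}+1\equiv 0\pmod q$ has exactly $(q-1)/2$ solutions, none of them zero. Since $a_0\not\equiv 0\pmod q$, all $\ell$ solutions of $f(x)\equiv 0\pmod q$ are nonzero, and by hypothesis $b$ of them are quadratic residues while the remaining $\ell-b$ are quadratic non-residues. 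Consequently the system $f(x)\equiv x^{(q-1)/2}-1\equiv 0\pmod q$ has exactly $b$ solutions, and the system $f(x)\equiv x^{(q-1)/2}+1\equiv 0\pmod q$ has exactly $\ell-b$ solutions.

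Finally I would check the hypotheses of Theorem 1: the polynomials $x^{(q-1)/2}\pm 1$ are monic and hence not identically zero in $\mathbb{Z}_q$, and $f$ is not identically zero in $\mathbb{Z}_q$ since $a_0\not\equiv 0\pmod q$. Applying Theorem 1 to each of the two systems yields $R(f(x),x^{(q-1)/2}-1)\equiv 0\pmod{q^b}$ and $R(f(x),x^{(q-1)/2}+1)\equiv 0\pmod{q^{\ell-b}}$, which completes the proof. There is no real obstacle: the argument is structurally identical to Theorem 3, and the only new input is the elementary identification of the residue/non-residue subsets via Euler's criterion.
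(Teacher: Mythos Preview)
Your proof is correct and follows essentially the same route as the paper: express the resultant via property $(i)$, use Euler's criterion to see that the solutions of $x^{(q-1)/2}\mp 1\equiv 0\pmod q$ are precisely the nonzero quadratic residues (respectively non-residues), count the common solutions with $f$ using $a_0\not\equiv 0\pmod q$, and apply Theorem~1. If anything, your version is a bit more careful in explicitly naming Euler's criterion and verifying that both polynomials are not identically zero in $\mathbb{Z}_q$; note only that the result you are ``imitating'' is the paper's Theorem~2, not Theorem~3.
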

\begin{proof} Consider $R(f(x),x^{\frac{q-1}{2}}-1)$. Since $f(x)= a_n \prod_{i=1}^{n}(x-\alpha_{i})$, then \beq \label{11}
R\(f(x),x^{\frac{q-1}{2}}-1\)=a_n^{\frac{q-1}{2}}\prod_{i=1}^{n}(\alpha_i^{\frac{q-1}{2}}-1).\eeq We know that $f(x)\equiv 0 \pmod{q}$ has $b$ nonzero solutions which are quadratic residues modulo $q$. Hence the  system of congruences $f(x)\equiv x^{\frac{q-1}{2}}-1\equiv 0 \pmod{q}$ has $b$ solutions. Then by Theorem 1  we have $R(f(x),x^{q-1}-1)\equiv 0 \pmod{q^b}$.
Since $\ell-b$ solutions of $f(x)\equiv 0 \pmod{q}$ are quadratic nonresidues modulo $q$, then by analogy we prove that $R(f(x),x^{\frac{q-1}{2}}+1)\equiv 0\pmod{q^{\ell-b}}$.
\end{proof}

As an illustration of applications of Theorem $1$ we consider the following  theorem.

\begin{theorem} Let $q$ be an odd prime and  $P$, $Q$ be any integers such that $Q\not\equiv 0\pmod{q}$. If the  Legendre symbol
$\left(\frac{P^2-4Q}{q}\right)$ is equal to $1$, then \beq \label{12} V_{q-1}(P,Q)\equiv Q^{q-1}+1 \pmod{q^2},\eeq \beq
\label{13}V_{\frac{q-1}{2}}^2(P,Q)\equiv \(Q^\frac{q-1}{2}+1\)^2 \pmod{q^2},\eeq where $V_{n}(P,Q)$ is the $n$-th  term of the Lucas sequence defined by the recurrence relation
\beq
\label{13.1}V_0=2,\,\,\, V_1=P,\,\,\, V_{i}=P V_{i-1}-Q V_{i-2},\,\,\, i\geq2.
\eeq
\end{theorem}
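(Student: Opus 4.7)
The plan is to model $V_n$ through the roots of the characteristic polynomial of the recurrence and then apply Theorem 2 (and, as a cross-check, Theorem 3). Write $f(x)=x^{2}-Px+Q$ and let $\alpha,\beta$ be its roots, so that $\alpha+\beta=P$, $\alpha\beta=Q$, and the standard Binet-type identity gives $V_{n}(P,Q)=\alpha^{n}+\beta^{n}$. The hypothesis $\left(\frac{P^{2}-4Q}{q}\right)=1$ says the discriminant is a nonzero square mod $q$, so $f(x)\equiv 0\pmod{q}$ has exactly $\ell=2$ distinct roots; these roots are nonzero because $f(0)=Q\not\equiv 0\pmod{q}$.

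For the first congruence I would apply Theorem 2 to $f$, which is monic of degree $2$ with $a_{0}=Q\not\equiv 0\pmod{q}$ and $\ell=2$ solutions. This yields
\[
R\bigl(f(x),x^{q-1}-1\bigr)=(\alpha^{q-1}-1)(\beta^{q-1}-1)\equiv 0\pmod{q^{2}}.
\]
Expanding the left-hand side and using $\alpha\beta=Q$, $\alpha^{q-1}+\beta^{q-1}=V_{q-1}$ gives
\[
(\alpha\beta)^{q-1}-(\alpha^{q-1}+\beta^{q-1})+1=Q^{q-1}-V_{q-1}+1\equiv 0\pmod{q^{2}},
\]
which is precisely \eqref{12}.

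For \eqref{13} the idea is to factor the desired difference so that it becomes the resultant already controlled by Theorem 2 (or alternatively by a product of the two resultants appearing in Theorem 3). Setting $u=\alpha^{(q-1)/2}$, $v=\beta^{(q-1)/2}$, so $uv=Q^{(q-1)/2}$ and $V_{(q-1)/2}=u+v$, a short algebraic manipulation gives
\[
V_{(q-1)/2}^{2}-\bigl(Q^{(q-1)/2}+1\bigr)^{2}=(u+v)^{2}-(uv+1)^{2}=-(u^{2}-1)(v^{2}-1)=-(\alpha^{q-1}-1)(\beta^{q-1}-1).
\]
The right-hand side is $-R\bigl(f(x),x^{q-1}-1\bigr)$, which is $\equiv 0\pmod{q^{2}}$ by the step above, yielding \eqref{13}.

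I do not expect a serious obstacle: the main thing to get right is the algebraic identity converting $V_{(q-1)/2}^{2}-(Q^{(q-1)/2}+1)^{2}$ into the product $(\alpha^{q-1}-1)(\beta^{q-1}-1)$, so that Theorem 2 can be invoked directly. If one preferred to use Theorem 3 instead, one would split this product as $(u-1)(v-1)(u+1)(v+1)=R\bigl(f,x^{(q-1)/2}-1\bigr)\cdot R\bigl(f,x^{(q-1)/2}+1\bigr)$; with $b$ of the two roots of $f$ being quadratic residues mod $q$, Theorem 3 gives divisibility by $q^{b}$ and $q^{2-b}$ respectively, again producing $q^{2}$ in the product. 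Either route closes the proof.
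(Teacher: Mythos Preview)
Your argument is correct and essentially matches the paper's proof: both compute $R(x^{2}-Px+Q,\,x^{q-1}-1)=Q^{q-1}-V_{q-1}+1$ and invoke the resultant divisibility result with $\ell=2$ to obtain \eqref{12}. The only cosmetic difference is in the passage to \eqref{13}: the paper uses the Lucas doubling identity $V_{2n}=V_{n}^{2}-2Q^{n}$ with $n=(q-1)/2$, whereas you factor $V_{(q-1)/2}^{2}-(Q^{(q-1)/2}+1)^{2}=-(\alpha^{q-1}-1)(\beta^{q-1}-1)$ directly, which is the same identity written out in terms of the roots.
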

\begin{proof} The roots of $x^2-P x+Q$ are $\alpha_{1}=\frac{P-\sqrt{P^2-4Q}}{2}$,
$\alpha_{2}=\frac{P+\sqrt{P^2-4Q}}{2}$. Hence
$R(x^2-Px+Q,x^{q-1}-1)=(\alpha_{1}\alpha_{2})^{q-1}-(\alpha_{1}^{q-1}+\alpha_{2}^{q-1})+1=1+Q^{q-1}-V_{q-1}(P,Q)$. Since we know $\left(\frac{P^2-4Q}{q}\right)=1$ and $Q\not\equiv 0\pmod{q}$, then the system of congruences
$x^2-P x+Q\equiv x^{q-1}-1\equiv 0 \pmod{q}$ has two solutions. So by Theorem 1 we have $1+Q^{q-1}-V_{q-1}(P,Q)\equiv 0 \pmod{q^2}$, thus we get $(\ref{12})$.
Now using the identity $V_{2n}(P,Q)= V_{n}^2(P,Q)-2Q^n$, we obtain $(\ref{13})$.\end{proof}
Note that the congruences $(\ref{12})$ and $(\ref{13})$ are well-known \cite{7,8,9}, but here we give an alternative completely independent proof of these results.

\begin{corollary} Let $q$ be an odd prime and $k$, $P$, $Q$ be any integers such that $k^2+P k+Q\not\equiv 0\pmod{q}$. If  $\left(\frac{P^2-4Q}{q}\right)=1$, then \beq \label{14} V_{q-1}(P+2k,k^2+P k+Q)\equiv (k^2+Pk+Q)^{q-1}+1 \pmod{q^2},\eeq
\beq \label{15}V_{\frac{q-1}{2}}^2(P+2k,k^2+P k+Q)\equiv \((k^2+P k+Q)^\frac{q-1}{2}+1\)^2 \pmod{q^2}.\eeq
\end{corollary}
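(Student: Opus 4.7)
The plan is to deduce Corollary 2 directly from Theorem 4 by the substitution $P'=P+2k$, $Q'=k^2+Pk+Q$. The key observation is that if $\alpha_1,\alpha_2$ are the roots of $x^2-Px+Q$, then $\alpha_1+k,\alpha_2+k$ are the roots of $x^2-P'x+Q'$: their sum is $(\alpha_1+\alpha_2)+2k=P+2k$ and their product is $\alpha_1\alpha_2+k(\alpha_1+\alpha_2)+k^2=Q+kP+k^2$. Thus $x^2-P'x+Q'$ is precisely the characteristic polynomial of the Lucas sequence appearing in $(\ref{14})$ and $(\ref{15})$.

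First I would verify the two hypotheses of Theorem 4 for $(P',Q')$. The nonvanishing condition $Q'\not\equiv 0\pmod{q}$ is exactly the assumption $k^2+Pk+Q\not\equiv 0\pmod{q}$. For the Legendre symbol, a direct expansion gives $(P')^2-4Q'=(P+2k)^2-4(k^2+Pk+Q)=P^2-4Q$, so $\left(\tfrac{(P')^2-4Q'}{q}\right)=\left(\tfrac{P^2-4Q}{q}\right)=1$. In other words, translating the roots by $k$ preserves the discriminant, which is precisely why Theorem 4 applies verbatim after the substitution.

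Finally, applying Theorem 4 with parameters $(P',Q')$ yields $(\ref{14})$ immediately, and the identity $V_{2n}(P',Q')=V_n^2(P',Q')-2(Q')^n$ (the same identity used at the end of the proof of Theorem 4) then delivers $(\ref{15})$ from $(\ref{14})$. The main obstacle is essentially nonexistent: the entire content of the corollary is the translation-invariance of the discriminant of a monic quadratic, so once that observation is made the result follows by a direct appeal to Theorem 4 with no further work.
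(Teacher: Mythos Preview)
Your proposal is correct and follows essentially the same route as the paper: the paper's proof is the single line ``Since $(P+2k)^2-4(k^2+Pk+Q)=P^2-4Q$, this corollary follows from Theorem~4,'' which is exactly your discriminant computation plus the appeal to Theorem~4. Your remarks on the translated roots and the separate derivation of $(\ref{15})$ via $V_{2n}=V_n^2-2Q^n$ are correct but superfluous, since Theorem~4 already delivers both congruences once the hypotheses $Q'\not\equiv0\pmod q$ and $\left(\tfrac{(P')^2-4Q'}{q}\right)=1$ are verified.
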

\begin{proof} Since $(P+2k)^2-4(k^2+P k+Q)=P^2-4Q$, this corollary follows from Theorem 4.\end{proof}


\subsection{The congruences involving the Lucas numbers}

Let $P=1$, $Q=-1$ and $\left(\frac{5}{q}\right)=1$, i.e., by  the Quadratic Reciprocity Law $q\equiv \pm 1\pmod{5}$. Let an integer $k$ satisfy $k^2+ k-1\not\equiv 0\pmod{q}$, then by Corollary $2$ \beq \label{L1} V_{q-1}(1+2k,k^2+k-1)\equiv
(k^2+k-1)^{q-1}+1 \pmod{q^2},\eeq
\beq \label{L2}V_{\frac{q-1}{2}}^2(1+2k,k^2+k-1)\equiv
(k^2+k-1)^{q-1}+2(k^2+k-1)^{\frac{q-1}{2}}+1 \pmod{q^2}.\eeq
If $k=0$, then \beq \label{L3} L_{q-1}\equiv 2 \pmod{q^2},\eeq
\beq
\label{L4}L_{\frac{q-1}{2}}^2\equiv 2+2(-1)^{\frac{q-1}{2}} \pmod{q^2}, \eeq
where $L_{n}$ is the $n$-th  Lucas number.

\subsection{The congruences involving the Pell-Lucas numbers.}

Let $P=2$, $Q=-1$ and $\left(\frac{8}{q}\right)=1$, i.e., by the Quadratic Reciprocity Law $q\equiv \pm 1\pmod{8}$. Let an integer $k$ satisfy $k^2+2k-1\not\equiv 0\pmod{q}$, then by Corollary $2$
\beq \label{P1} V_{q-1}(2+2k,k^2+ 2k-1)\equiv
(k^2+2k-1)^{q-1}+1 \pmod{q^2},\eeq
\beq \label{P2}V_{\frac{q-1}{2}}^2(2+2k,k^2+2k-1)\equiv
(k^2+2k-1)^{q-1}+2(k^2+2k-1)^{\frac{q-1}{2}}+1 \pmod{q^2}.\eeq
If $k=0$, then
\beq \label{P3} \widetilde{P}_{q-1}\equiv 2 \pmod{q^2},\eeq
\beq \label{P4} \widetilde{P}_{\frac{q-1}{2}}^2\equiv 2+2(-1)^{\frac{q-1}{2}} \pmod{q^2}, \eeq
where $\widetilde{P}_{n}$ is the $n$-th  Pell-Lucas number defined by:
\beq \label{P5} \widetilde{P}_0=2,\,\,\, \widetilde{P}_1=2,\,\,\, \widetilde{P}_i=2\widetilde{P}_{i-1}+\widetilde{P}_{i-2},\,\,\, i\geq2. \eeq

\noindent{\bf Acknowledgments.} The author would like to thank Prof. K.A. Sveshnikov and Prof. R.M.  Kolpakov  for valuable suggestions. Also, the author is indebted to the referee for many useful comments.

\medskip


\end{document}